\newtheorem{theorem}{Theorem}[section]
\newtheorem{lemma}[theorem]{Lemma}
 \numberwithin{equation}{section}
\theoremstyle{definition}
\newtheorem{definition}[theorem]{Definition}
\newtheorem{remark}{Remark}
\newcommand{\keywords}
\def\bc{\begin{center}}       \def\ec{\end{center}}
\def\ba{\begin{array}}        \def\ea{\end{array}}
\def\be{\begin{equation}}     \def\ee{\end{equation}}
\def\bea{\begin{eqnarray}}    \def\eea{\end{eqnarray}}
\def\beaa{\begin{eqnarray*}}  \def\eeaa{\end{eqnarray*}}
\def\dfrac#1#2{\frac{\displaystyle {#1}}{\displaystyle {#2}}}
\def\mathbb{\Bbb}
\begin{document}

\title{\bf Global solutions of a Keller--Segel system with saturated logarithmic sensitivity function}
\author{Qi Wang \thanks{(Email:{\tt qwang@swufe.edu.cn}).  This research is partially supported by the Fundamental Research Funds for the Central Universities, China.}\\
Department of Mathematics\\
Southwestern University of Finance and Economics\\
555 Liutai Ave, Wenjiang, Chengdu, Sichuan 611130, China
}

\date{}
\maketitle

\abstract
We study a Keller-Segel type chemotaxis model with a modified sensitivity function in a bounded domain $\Omega\subset \mathbb{R}^N$, $N\geq2$.  The global existence of classical solutions to the fully parabolic system is established provided that the ratio of the chemotactic coefficient to the motility of cells is not too large.

\textbf{Chemotaxis, global existence, logarithmic sensitivity.}

\textbf{AMS Subject Classification: Primary: 35B40, 35J57; Secondary: 92D25, 35B32, 35B35.}

\section{Introduction}

Chemotaxis is the oriented movement of cells along the gradient of certain chemicals in their environment.  One of the most interesting phenomena in chemotaxis is the aggregation of chemotactic cells.  Keller and Segel initiated the mathematical modeling of chemotaxis in their pioneering works \cite{KS,KS1,KS2} during 1970s.  Let $\Omega \subset \mathbb{R}^N$, $N \geq 1$ be a bounded domain and we denote $u(x,t)$ and $v(x,t)$ the cell population density and the chemical concentration respectively at space-time $(x,t)$.  Then a classical Keller-Segel chemotaxis model reads as follows
\begin{equation}\label{1}
\left\{
\begin{array}{ll}
 u_t=\nabla\cdot (d_1(u,v) \nabla u - \chi (u,v) \nabla \phi(v)) & x \in\Omega,~t>0,\\
v_t=d_2 (u,v) \Delta v+k(u,v)& x \in\Omega,~t>0,\\
u(x,0)=u_0(x)\geq 0,v(x,0)=v_0(x)\geq 0,& x\in\Omega,
\end{array}
\right.
\end{equation}
where $d_1>0$ is called the \emph{motility} of the cells and it interprets the ability of cells to move randomly.  $d_2>0$ is the diffusion rate of the chemical.  The chemotactic term $\chi(u,v)>0$ measures the strength of chemotactic response of cells to the chemical.  $\phi(v)>0$ is called the \emph{sensitivity function} and it reflects the variation of cellular sensitivity with respect to the levels of chemical concentration. $k(u,v)$ is the creation and degradation rate of the chemicals.

In this paper, we are concerned with the existence of global solutions of system (\ref{1}).  For the simplicity of our analysis and discussions, we assume that $d_1$ and $d_2$ are positive constants, and we choose $\chi(u,v)=\chi_0u$ where $\chi_0$ is a positive constant called \emph{chemotactic coefficient}.  Moreover, we assume that the kinetic term takes the form $k(u,v)=-c_1 v +c_2 u$ for some positive constants $c_1$ and $c_2$.  The choice of such cellular kinetics is selected in order to model the phenomenon that the cells secret the chemical which is consumed by certain enzyme.  Then the Keller-Segel system (\ref{1}) under the homogeneous Neumann boundary conditions takes the following form
\begin{equation}\label{2}
\left\{
\begin{array}{ll}
u_t=\nabla\cdot (d_1 \nabla u - \chi_0 u \nabla \phi(v)) & x \in\Omega, ~t>0,\\
v_t= d_2 \Delta v - c_1 v+c_2 u& x \in\Omega,~ t>0,\\
\dfrac{\partial u}{\partial \textbf{n}}=\dfrac{\partial v}{\partial \textbf{n}}=0, & x \in \partial \Omega,~t>0,\\
u(x,0)=u_0(x)\geq 0,~v(x,0)=v_0(x)\geq 0,& x\in \Omega,
\end{array}
\right.
\end{equation}
where $\partial \Omega$ is the smooth boundary of $\Omega$, $\textbf{n}$ is the unit outer normal to $\partial \Omega$.  The non-flux boundary conditions interpret an enclosed domain that inhibits both cell immigration and chemical flux across $\partial \Omega$.  Throughout the rest of this paper, we assume that the initial data $u_0$ and $v_0$ are not identical zeros.

In many cases, solutions of (\ref{2}) have been proved to exist globally in time--see the survey paper of Hillen and Painter \cite{HP}.   In particular, it is also well-known that the solutions of (\ref{2}), including a collection of its variations, in a 1D domain are always global and bounded in time according to the results in \cite{HPS, OY}.  However, such optimal result is not available for higher-dimensional domain and the sensitivity function $\phi(v)$ is an essential part in the existence of global solutions.  Two commonly utilized sensitivity functions are $\phi=v$ and $\phi=\ln v$, which lead (\ref{2}) to the so-called Minimal model and the logarithmic model, respectively.

For $\phi(v)=v$, $\Omega \subset \mathbb{R}^2$ and $d_1=d_2=1$, Nagai, etc. \cite{NSY} showed the existence of global solutions $(u(x,t),v(x,t))$ of (\ref{2}) provided that either $\int_\Omega u_0(x)dx < 4\pi/(c_2 \chi_0)$ or $\int_\Omega u_0(x)dx < 8\pi/(c_2 \chi_0)$ with $\Omega$ being a disk and $(u_0(x),v_0(x))$radial in $x$.  For $\phi(v)=v$, $\Omega \subset \mathbb{R}^N,N\geq3$, and $d_1=d_2=\chi_0=c_1=c_2=1$, Winkler \cite{Wk} established the global solutions of (\ref{2}), provided that $\Vert u_0(x) \Vert_{L^{N/2+\delta}}$ and $\Vert \nabla v_0(x) \Vert_{L^{N+\delta}} $ are small for any $\delta>0$.

Logarithmic model (\ref{2}) with $\phi(v)=\ln v$ has also attracted the attention of many authors.  For $\Omega \subset \mathbb{R}^2$ and $d_1=d_2=c_1=c_2=1$, Nagai, etc. \cite{NSY2} proved the existence of global solutions of (\ref{2}) if either $\chi_0<1$ or $\chi<5/2$ with $\Omega$ being a disk and $(u_0(x),v_0(x))$ radial in $x$.  Winkler \cite{Wk1} investigated the same problem in $\mathbb{R}^N,N\geq 2$ and established the global classical solutions if $\chi<\sqrt{2/N}$ and global weak solutions if $\chi<\sqrt{(N+2)/(3N-4)}$.  Moreover, Stinner and Winkler \cite{SW} showed the existence of global weak solutions of (\ref{2}) regardless of the size of $\chi$.  Their approach is based on the combination of some weighted integral estimates with a refined Hardy--Sobolev inequality.  On the other hand, a parabolic-elliptic system of (\ref{2}) was investigated by Nagai and Senba \cite{NS} and they established globally bounded solutions if either $N=2$, $\Omega$ is a disk, and $(u_0(x),v_0(x))$ is radial, or if $N\geq3$, $\Omega$ is a ball, and $(u_0(x),v_0(x))$ is radial while $\chi<2/(N-2)$.

The analysis of these results is heavily involved with the estimates of certain weighted functions of $u$ and $v$, which has been recently developed by D. Horstmann, M. Winkler et al.  See \cite{HW,Wk1,Wk2} and also \cite{Y} for details.  For the results concerning the blow-up solutions of (\ref{2}) and the global solutions of Keller-Segel models of other types, we refer readers to the surveys \cite{Ho,Ho2,HP}.

Though various forms of sensitivity functions can be chosen to model different types of chemotaxis, $\phi(v)=\ln v$ was selected largely due to the Weber-Fechner's law for cellular behaviors which states that, \emph{the subjective sensation is proportional to the logarithm of the stimulus intensity} \cite{HP}.  By an inspection of the logarithmic model $\phi(v)=\ln v$, one observes that the dynamics of the cellular movements are dominated by the taxis flux $\frac{\chi}{v} \nabla v$, which may become unbounded if $v \approx 0$.  Though it is not reasonable to assume that a low chemical concentration elicits a significant chemotactic response from the cell's motility, this singularity is a necessary mechanism for (\ref{2}) to generate travelling wave solutions and also has important applications.  See the last section of \cite{LW} for detailed discussions.  We would like to mention that, apart from the global solutions of the model, travelling waves are also interesting especially from the viewpoint of applications.  See \cite{LW,LW2} or the review paper \cite{W} by Z.A. Wang for works in this direction.

In this paper, we choose $\phi(v)=\ln(v+c)$ for a positive constant $c$ which has a damping effect on $\phi'(v)$ at $v=0$, and consider the following system over the smoothly bounded domain $\Omega \in \mathbb{R}^N$, $N\geq2$,
\begin{equation}\label{3}
\left\{
\begin{array}{ll}
u_t=\nabla\cdot (d_1 \nabla u - \dfrac{\chi_0 u}{v+c} \nabla v) & x \in\Omega, t>0,\\
v_t= d_2 \Delta v - c_1 v+c_2 u& x \in\Omega, t>0,\\
\dfrac{\partial u}{\partial \textbf{n}}=\dfrac{\partial v}{\partial \textbf{n}}=0, & x \in \partial \Omega,t>0,\\
u(x,0)=u_0(x)\geq 0,~v(x,0)=v_0(x)\geq 0,& x\in \Omega.
\end{array}
\right.
\end{equation}
For $N=2$ and $d_1=d_2=c_1=c_2=1$, Biler \cite{B} established the global solutions to a parabolic-elliptic system of (\ref{3}), where the $v$ equation is replaced by its stationary counterpart.   We aim to investigate the global existence of full parabolic--parabolic system (\ref{3}) for $N\geq2$ with arbitrary positive coefficients $d_1,d_2,c_1$ and $c_2$.  First of all, we introduce the scalings
\[d_1 t= \tilde t, \frac{\chi_0}{d_1}=\chi, \frac{d_2}{d_1}=k, \frac{c_1}{d_1}=\alpha, \frac{c_2}{d_1}=\beta,\]
and transform (\ref{3}) into
\begin{equation}\label{4}
\left\{
\begin{array}{ll}
u_t=\nabla\cdot ( \nabla u - \dfrac{\chi u}{v+c} \nabla v) & x \in\Omega, t>0,\\
v_t=k \Delta v - \alpha v+\beta u& x \in\Omega, t>0,\\
\dfrac{\partial u}{\partial \textbf{n}}=\dfrac{\partial v}{\partial \textbf{n}}=0, & x \in \partial \Omega,t>0,\\
u(x,0)=u_0(x)\geq 0,~v(x,0)=v_0(x)\geq 0,& x\in \Omega,
\end{array}
\right.
\end{equation}
where the tildes are dropped without confusing the reader.  Throughout the rest of this paper, we shall deal with (\ref{4}).  For the simplicity of notations, we denote
\[k_1=\left(1+4/N\right)-2\sqrt{2/N+4/N^2},~~k_2=(1+4/N)+2\sqrt{2/N+4/N^2}\]
 and \[\chi_1=(k-1)/2-\sqrt{2k/N},~~\chi_2=(k-1)/2+\sqrt{2k/N},\]
then the main theorem of our paper goes as follows.
\begin{theorem}\label{T1}  Let $\Omega$ be a bounded domain in $\mathbb{R}^N$, $N\geq2$ and $(u_0(x),v_0(x)) \in W^{1,p}(\Omega)\times W^{2,p}(\Omega)$ for some $p>N$.  Assume that $\chi\in(0,\chi_2)$ if $k\in(k_1,k_2)$ and $\chi \in (\chi_1,\chi_2)$ if $k\in [k_2,\infty)$.  Then there exists a unique classical global solution $(u(x,t),v(x,t))$ to (\ref{4}) for arbitrary positive constants $\alpha, \beta$ and $c$.
\end{theorem}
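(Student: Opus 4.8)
The plan is to prove global existence via the standard route of local existence plus a priori estimates, with the key being a carefully weighted energy functional that exploits the bounded sensitivity $\phi'(v)=\chi/(v+c)\le \chi/c$. First I would invoke Amann's theory to obtain a unique local classical solution $(u,v)$ on a maximal time interval $[0,T_{\max})$, together with the standard extensibility criterion: if $T_{\max}<\infty$, then $\|u(\cdot,t)\|_{L^\infty(\Omega)}+\|v(\cdot,t)\|_{W^{1,p}(\Omega)}\to\infty$ as $t\nearrow T_{\max}$. The parabolic maximum principle applied to the $u$-equation (which is in divergence form with no zeroth-order term) gives mass conservation $\int_\Omega u(\cdot,t)\,dx=\int_\Omega u_0\,dx=:m$, and nonnegativity of both components; these are the baseline bounds. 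The whole problem then reduces to upgrading the $L^1$ control of $u$ to an $L^\infty$ bound (and a corresponding $W^{1,p}$ bound on $v$), and the threshold conditions on $(\chi,k)$ are exactly what make this upgrade possible.

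The heart of the argument is a \emph{weighted energy estimate}. I would test the $u$-equation against a power $u^{q-1}$ (or against $\frac{1}{q}u^q$ after differentiating $\int_\Omega u^q$) and the $v$-equation against a suitable multiple of $-\Delta v$ or $v^{r}$, seeking a functional of the form
\begin{equation}\label{energyfunc}
\frac{d}{dt}\int_\Omega \left( u^q + \lambda\,|\nabla v|^{2s} \right)dx \le -\delta \int_\Omega \left(|\nabla u^{q/2}|^2 + \cdots\right)dx + (\text{lower order}),
\end{equation}
for appropriately chosen exponents $q,s$ and a coupling constant $\lambda>0$. The cross term arising from integrating the taxis flux by parts, namely $\int_\Omega \chi\,u^{q-1}\nabla u\cdot\frac{\nabla v}{v+c}$, is the dangerous one; using $\frac{1}{v+c}\le\frac{1}{c}$ together with Young's inequality splits it into a good dissipative piece absorbable by the diffusion of $u$ and a remainder controlled by $\int_\Omega u^{q-1}|\nabla v|^2$. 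The constants $k_1,k_2,\chi_1,\chi_2$ defined just before the theorem are precisely the algebraic thresholds ensuring that the quadratic form in the leading coefficients is negative definite, so that the dissipation dominates and the cross terms can be absorbed. This is where the condition ``$\chi$ not too large relative to $k$'' enters in an essential way.

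Once \eqref{energyfunc} closes for one admissible exponent pair, I would iterate: the Gagliardo--Nirenberg inequality interpolates the good gradient term against the conserved $L^1$ mass to dominate the lower-order terms, yielding via a Gronwall argument a uniform-in-time bound $\|u(\cdot,t)\|_{L^q(\Omega)}\le C$ for $q$ large (in fact any $q$ beyond $N/2$ suffices to start the bootstrap). Feeding this $L^q$ bound on $u$ into the variation-of-constants representation for $v$ and the smoothing estimates for the Neumann heat semigroup $e^{tk\Delta}$ produces a bound on $\|\nabla v(\cdot,t)\|_{L^\infty(\Omega)}$. With $\nabla v$ bounded, the drift coefficient $\frac{\chi u}{v+c}\nabla v$ in the $u$-equation becomes a genuinely subcritical perturbation, and a Moser--Alikakos iteration (or a further semigroup bootstrap) promotes the $L^q$ bound to $\|u(\cdot,t)\|_{L^\infty(\Omega)}\le C$ uniformly in $t$. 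Together these contradict the blow-up criterion, forcing $T_{\max}=\infty$.

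The main obstacle I expect is closing the weighted estimate \eqref{energyfunc} sharply, i.e.\ verifying that the thresholds $k_1,k_2,\chi_1,\chi_2$ really make the relevant quadratic form sign-definite for \emph{some} usable exponent, rather than merely formally. The delicate point is the simultaneous treatment of the coupling: the term $\int_\Omega u^{q-1}|\nabla v|^2$ cannot be handled by the $u$-dissipation alone and must be balanced against the dissipation of $|\nabla v|^{2s}$ coming from the $v$-equation, which itself generates boundary integrals on $\partial\Omega$ through the integration by parts. Controlling those boundary terms (they involve $\frac{\partial|\nabla v|^2}{\partial\mathbf n}$) typically requires either the convexity of $\Omega$ or a trace/Sobolev interpolation absorbing them into the interior dissipation; handling this for a general smooth domain is the technically heaviest step, and I would expect the precise form of $k_1,k_2$ to emerge exactly from optimizing this balance.
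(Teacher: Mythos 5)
Your overall architecture (local existence, extensibility criterion, $L^q$ bound on $u$, semigroup smoothing for $\nabla v$, bootstrap to $L^\infty$) matches the paper's, but the central step is handled differently and, as you describe it, it has a genuine gap. You propose to control the taxis cross term by invoking the crude bound $\frac{1}{v+c}\le\frac{1}{c}$ and then absorbing the remainder into the dissipation of a decoupled functional $\int_\Omega\bigl(u^q+\lambda|\nabla v|^{2s}\bigr)dx$. Once you discard the factor $\frac{1}{v+c}$ in favor of $\frac{1}{c}$, any Young-type absorption produces a smallness condition on $\chi/c$ (the remainder is of order $\frac{\chi^2}{c^2}\int_\Omega u^q|\nabla v|^2$), whereas the theorem asserts a threshold $\chi\in(\chi_1,\chi_2)$ that is completely independent of $c$ and valid for \emph{arbitrary} $c>0$. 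So the thresholds $k_1,k_2,\chi_1,\chi_2$ cannot emerge from your balance; they do not come from optimizing boundary terms of $\int|\nabla v|^{2s}$ either. The missing idea is the \emph{product-weighted} functional: the paper tests $u^p$ against $\varphi(v)=(v+c)^{\frac{1-p}{2}}$ and tracks $y_p(t)=\int_\Omega u^p(v+c)^{\frac{1-p}{2}}dx$. With this weight the powers of $(v+c)$ cancel identically in the quadratic form coming from Young's inequality, and negativity of the $|\nabla v|^2$ coefficient reduces to the purely algebraic, $c$-independent and $v$-independent condition $f(p)=(\chi-\frac{k-1}{2})^2-\frac{k}{p}<0$ for some $p>\frac N2$ --- which is exactly the hypothesis of the theorem. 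This also sidesteps entirely the functional $\int|\nabla v|^{2s}$ and the boundary integrals $\frac{\partial|\nabla v|^2}{\partial\mathbf n}$ you flag as the heaviest step: no such terms appear in the paper's argument.

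Two smaller points. First, you claim a \emph{uniform-in-time} bound $\|u(\cdot,t)\|_{L^q}\le C$ via Gagliardo--Nirenberg and mass conservation; the paper obtains only $y_p(t)\le y_p(0)e^{\frac{\alpha(p-1)}{2}t}$ and hence time-dependent bounds $C(t)$, which suffice for global existence, and it explicitly leaves uniform boundedness open in the discussion. You should not assert uniformity without an argument. Second, the starting exponent for the bootstrap is not ``any $q$ large'' but precisely $p\in\bigl(\frac N2,\frac{k}{(\chi-\frac{k-1}{2})^2}\bigr)$, and when $\chi\in I^+(N/2)\setminus I^+(N)$ one only gets $\nabla v$ in $L^q$ for finite $q$, so a genuine iteration on exponents $\mu_n$ is needed before reaching $L^\infty$; your Moser--Alikakos step would have to be adapted to that weaker gradient information.
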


\begin{remark} In \cite{NS, NSY, NSY2, Wk, Wk1, Wk2}, it seems that the condition $d_1=d_2=1$ in (\ref{2}) (at least $d_1=d_2$, i.e., $k=1$ in (\ref{4})) can not be dropped due to technical reasons.  Theorem \ref{T1} does not require $k=1$, since we always have $k_1<1<k_2$ for all $N>1$.  However, if $k=1$, we readily see that (\ref{4}) admits global solutions if $\chi<\chi_2=\sqrt{2/N}$, which coincides with the upper bound for $\chi$ in \cite{Wk1}.  We want to point out that, a lower bound $\chi_1$ is need for $\chi$ in Theorem \ref{T1} for $k$ being large, however, this assumption is only technically needed by our approach and we believe this unusual lower bound can be relaxed.
\end{remark}
We would also like to mention that, for $\chi(v)\leq\frac{\chi}{(v+c)^p}$ with $p>1$, Winkler \cite{Wk2} investigated a similar system and obtained a uniform-in-time bounded solution for all $\chi>0$ in any dimensions.  However, the borderline $p=1$ has not been studied due to technical issues and it is the goal of this paper to study the system under this crucial condition.  Our work is motivated by that of Winkler \cite{Wk2} and we apply a technique developed by Winkler in \cite{Wk,Wk1,Wk2}, etc. though there are some difficulties to overcome since our results apply for all positive $\alpha,\beta$ and all $k>k_1$.  Our analysis also covers the one-dimensional problem, however, it has been shown that globally bounded solutions exist for $\chi\in(0,\infty)$ in this case.

Throughout this paper, we assume that $C$ is a generic positive constant that may vary from line to line, unless otherwise noted.

\section{Existence of global solutions}

\subsection{Preliminaries}

First of all, we collect some basic properties of an analytic semigroup subject to the homogeneous Neumann boundary condition.  Then we proceed to obtain local solutions to (\ref{4}) by a standard fixed point argument.  These results are classical and well-known and we refer readers to \cite{H} for the classical results and \cite{HW, Wk} for the recent developments.
We denote the linear operator $A_k=-k\Delta+1, k>0$ and $A=-\Delta+1$.  We shall only collect the properties of $A$ here while the same properties for $A_k$ will be applied in the coming analysis.  It is well known that $A$ is sectorial in $L^p(\Omega)$ and it possesses closed fractional powers $A^\theta, \theta \in (0,1)$.  The domain $\mathcal{D}(A^\theta)$ equipped with norm
\[\Vert w \Vert_{\mathcal{D}(A^\theta)} =\Vert A^\theta w \Vert_{L^p(\Omega)}\]
is a Banach space and for $m=0,1$, $p\in[1,\infty]$, $q\in(1,\infty)$, we have the following embedding
\begin{equation}\label{5}
\mathcal{D}(A^\theta) \hookrightarrow
\left\{
\begin{array}{ll}
W^{m,q}(\Omega),&\text{ if } m-N/q<2\theta-N/p, \\
C^\delta(\bar{\Omega}),& \text{ if } 2\theta-N/p >\delta\geq0.
\end{array}
\right.
\end{equation}
Moreover, $(e^{t\Delta})_{t\geq 0}$ maps $L^p,p\geq1$ into $\mathcal{D}(A^\theta)$ and for any $q\in(p,+\infty]$, we have the following estimates
\begin{equation}\label{6}
\Vert A^\theta e^{-At} w \Vert_{L^q(\Omega)} \leq C t^{-\theta-\frac{N}{2}(\frac{1}{p}-\frac{1}{q})} e^{-\nu t}\Vert w \Vert_{L^p(\Omega)}, \forall~w \in L^p(\Omega),
\end{equation}
and
\begin{equation}\label{7}
\Vert A^\theta e^{t\Delta} w \Vert_{L^q(\Omega)} \leq C t^{-\theta-\frac{N}{2}(\frac{1}{p}-\frac{1}{q})} e^{-\nu t}\Vert w \Vert_{L^p(\Omega)}, \forall~w \in L^p(\Omega),\int_\Omega w=0,
\end{equation}
where $C$ depends on $\Omega$, $N$ and $p$ and $\nu>0$ is the first nonzero eigenvalue of $-\Delta$ with Neumann boundary condition.  Furthermore, if $1\leq p\leq q \leq \infty$, there exists a positive constant $C>0$ such that, for all $t>0$
\begin{equation}\label{8}
\Vert \nabla e^{t\Delta} w \Vert_{L^q(\Omega)} \leq C\Big(1+ t^{-\frac{1}{2}-\frac{N}{2}(\frac{1}{p}-\frac{1}{q})}\Big) e^{-\nu t}\Vert w \Vert_{L^p(\Omega)}, \forall~w \in L^p(\Omega).
\end{equation}

\subsection{Local existence}

In this section, we prove the existence and uniqueness of local solutions of (\ref{4}) by the standard Banach fixed point theorem.  Our proof of the local existence starts with a notion of weak solutions and the classical solutions can be established through parabolic regularity theory.  To this end, we convert (\ref{4}) into the following integral forms
\begin{align}\label{9}
u(\cdot,t)=&\left. e^{-At}u_0\!-\!\int_0^t \nabla \cdot e^{-A(t-s)}\left(\frac{\chi u(\cdot,s)}{v(\cdot,s)\!+\!c} \nabla v(\cdot,s) \right) ds\!+\!\int_0^t  e^{-A(t-s)} u(\cdot,s) ds, \right. \nonumber\\
v(\cdot,t)=&\left.e^{-A_kt}v_0+\int_0^t e^{-A_k(t-s)} \Big((1-\alpha)v(\cdot,s)+\beta u(\cdot,s) \Big)ds, \right.
\end{align}
and we shall show that (\ref{9}) possesses a fixed point in a neighborhood of $(u_0,v_0)$.

\begin{lemma}\label{lem1}
Let the initial data $u_0(x), v_0(x)\geq 0$ be not identically zeros and $(u_0,v_0) \in C(\bar{\Omega}) \times (W^{1,p}(\Omega))$ for some $p>N$.  Then there exists $T_{\max}\in(0,\infty]$ such that
(\ref{4}) has a unique nonnegative solution $(u(x,t),v(x,t))$ which is continuous in $\bar \Omega \times (0,T_{\max})$ with $(u(x,t),v(x,t))\in C^{2,1}\big(\bar{\Omega}  \times (0,T_{max})\big) \times L^\infty_{loc}\big([0,T_{max});W^{1,p}(\Omega)\big)$ and $(u,v)\in C^{2,1}\big( \bar{\Omega} \times (0,T_{max})\big)$, where $T_{\max}$ is the so-called maximal existence time; furthermore, either $T_{\max}=\infty$ or $T_{\max}<\infty$ and
\[\lim_{t \rightarrow T^-_{\max}}   \Vert u(\cdot,t)\Vert_{L^\infty(\Omega)}+\Vert v(\cdot,t)\Vert_{L^\infty(\Omega)}=\infty.\]
\end{lemma}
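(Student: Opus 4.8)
The plan is to solve the fixed point problem posed by the integral identities (\ref{9}) via the Banach contraction principle, and then to bootstrap regularity, establish nonnegativity, and read off the extensibility alternative. For $T>0$ to be fixed later, introduce the Banach space
\[
X_T = C\big([0,T];C(\bar\Omega)\big)\times C\big([0,T];W^{1,p}(\Omega)\big),
\]
let $\Phi=(\Phi_1,\Phi_2)$ be the map sending $(u,v)$ to the right-hand sides of (\ref{9}), and work inside the closed ball
\[
\Sigma=\Big\{(u,v)\in X_T:\ \|u-e^{-At}u_0\|_{C([0,T];C(\bar\Omega))}\le R,\ \|v-e^{-A_kt}v_0\|_{C([0,T];W^{1,p})}\le R\Big\}
\]
centered at the free evolutions, for a radius $R>0$ chosen once and for all. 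The linear lower-order pieces (the terms $\int_0^t e^{-A(t-s)}u\,ds$ and $\int_0^t e^{-A_k(t-s)}((1-\alpha)v+\beta u)\,ds$) are controlled directly by the smoothing estimates (\ref{6}) and (\ref{8}), so the only genuinely nonlinear contribution is the chemotactic flux $g(u,v):=\chi u\,\nabla v/(v+c)$.

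The self-mapping and contraction properties hinge on the positivity of $c$. Since $v\ge0$ one has $v+c\ge c$, so $w\mapsto 1/(w+c)$ is bounded by $1/c$ and Lipschitz on $[0,\infty)$; combined with the Sobolev embedding $W^{1,p}(\Omega)\hookrightarrow C(\bar\Omega)$ (valid as $p>N$), this makes $g$ locally Lipschitz with $\|g(u,v)\|_{L^p}\le C\|u\|_{C(\bar\Omega)}\|\nabla v\|_{L^p}$ on bounded sets. To estimate $\Phi_1$ I would fix $\theta\in(N/(2p),1/2)$, so that $\mathcal{D}(A^\theta)\hookrightarrow C(\bar\Omega)$ by (\ref{5}), and write $A^\theta\nabla\!\cdot\!e^{-A(t-s)}=A^{\theta+1/2}e^{-A(t-s)}\,(A^{-1/2}\nabla\!\cdot\!)$; using boundedness of $A^{-1/2}\nabla\!\cdot\!$ on $L^p$ and (\ref{6}) with exponent $\theta+1/2$ yields the kernel $(t-s)^{-\theta-1/2}$, which is integrable precisely because $\theta<1/2$, producing a prefactor $CT^{1/2-\theta}$. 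The component $\Phi_2$ and its gradient are handled by (\ref{8}) with $q=p$, giving an integrable factor $(t-s)^{-1/2}$. Thus for $T$ small (with $R$ adjusted to the data norms) $\Phi$ maps $\Sigma$ into itself; the Lipschitz bounds on $g$ give an analogous estimate on $\Phi(u_1,v_1)-\Phi(u_2,v_2)$ with the same power $T^{1/2-\theta}$, so $\Phi$ is a contraction and admits a unique fixed point $(u,v)\in X_T$.

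With a mild solution in hand I would upgrade its regularity by a standard bootstrap: parabolic $L^p$ theory applied to the $v$-equation (a linear inhomogeneous heat equation with source $\beta u\in L^\infty$ and Neumann data) gives Hölder continuity of $v$ and $\nabla v$ for $t>0$; feeding this into the $u$-equation, written in nondivergence form
\[
u_t=\Delta u-\frac{\chi}{v+c}\nabla v\cdot\nabla u-\chi u\,\nabla\!\cdot\!\Big(\frac{\nabla v}{v+c}\Big),
\]
Schauder theory yields $u\in C^{2,1}(\bar\Omega\times(0,T_{\max}))$, and a further iteration lifts both components to the asserted classical regularity. Nonnegativity follows from the maximum principle: once $u\ge0$, the $v$-equation is linear with nonnegative forcing and homogeneous Neumann data, so $v\ge0$; and with $v$ fixed the displayed $u$-equation is of the form $u_t=\Delta u+b\cdot\nabla u+a\,u$ with bounded coefficients $b,a$, which preserves the sign of $u_0\ge0$. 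The strong maximum principle together with the assumption that the data are not identically zero yields strict positivity for $t>0$, consistent with $v+c\ge c>0$ throughout.

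Finally, the extensibility dichotomy comes from a continuation argument: the length $T$ of the interval produced by the contraction depends only on $\|u_0\|_{C(\bar\Omega)}$, $\|v_0\|_{W^{1,p}}$ and the fixed parameters, so the solution can be restarted at any time at which these norms stay finite, and setting $T_{\max}$ to be the supremum of existence times forces either $T_{\max}=\infty$ or blow-up of $\|u(\cdot,t)\|_{L^\infty}+\|v(\cdot,t)\|_{L^\infty}$ as $t\to T_{\max}^-$. I expect the main obstacle to be the simultaneous closure of the fixed point on the two heterogeneous components: $u$ must be measured in $C(\bar\Omega)$ while $v$ lives in $W^{1,p}(\Omega)$, and the flux couples $\|u\|_{C(\bar\Omega)}$ with $\|\nabla v\|_{L^p}$ through the singular kernel $(t-s)^{-\theta-1/2}$. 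Verifying integrability of this kernel is exactly where $p>N$ enters, via the nonempty window $\theta\in(N/(2p),1/2)$ dictated by (\ref{5}); and it is the positivity of $c$ that removes the singularity of $1/(v+c)$ and keeps the nonlinearity Lipschitz all the way down to $v=0$.
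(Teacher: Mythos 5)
Your proposal follows essentially the same route as the paper: a Banach fixed point argument for the mild formulation (\ref{9}) in the product space $C([0,T];C(\bar\Omega))\times C([0,T];W^{1,p}(\Omega))$, with the contraction coming from the semigroup smoothing estimates and an integrable kernel $(t-s)^{-\theta-1/2}$ for $\theta\in(N/(2p),1/2)$, followed by a parabolic regularity bootstrap, the maximum principle for nonnegativity, and a continuation argument for the blow-up dichotomy. The only notable divergence is that the paper supplements the fixed point with a separate Gronwall-type energy estimate on $w=u_1-u_2$, $z=v_1-v_2$ to obtain uniqueness among all solutions rather than only within the contraction ball, a point your write-up leaves implicit.
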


\begin{proof}  For each fixed $T>0$, we define the operator
\[\mathcal{T}:
\begin{pmatrix}
u\\
v
\end{pmatrix}
\rightarrow
\begin{pmatrix}
\Phi\\
\Psi
\end{pmatrix},
\]
where $\Phi$ is the given as the right hand side of the first equation in (\ref{9}) and $\Psi$ is the right hand side in its second equation.  Furthermore, we introduce the function space
\[X:=\{(u,v) ~\vert~ (u,v)\in C\big([0,T];C(\bar{\Omega})\big) \times C\big([0,T];W^{1,p}(\Omega)\big)  \} ,\]
equipped with norm $\Vert (u,v) \Vert _X= \max_{s\in[0,t]}\Vert u \Vert_{C(\bar{\Omega})}+\max_{s\in[0,t]}\Vert v \Vert_{W^{1,p}(\Omega)}, t\in [0,T]$.  For each fixed $(u_0,v_0)$, we want to show that the unit ball $B(u_0,v_0)$ in $X$ is mapped into itself by $\mathcal{T}$ for all $t\in (0,T)$.

Taking any two pairs $(u_1,v_1)$ and $(u_2,v_2)$ in $B(u_0,v_0)$, we have that
\[\Big\vert \frac{u_1 \nabla v_1}{v_1+c} -\frac{u_2 \nabla v_2}{v_2+c}\Big\vert=\Big\vert\frac{u_1 \nabla v_1}{v_1+c} -\frac{u_1 \nabla v_2}{v_1+c}+\frac{u_1\nabla v_2}{v_1+c} -\frac{u_2 \nabla v_2}{v_2+c}\Big\vert\leq C \Big\vert(u_1-u_2)+\nabla(v_1-v_2)\Big\vert,\]
thus we conclude from the estimates (\ref{5})--(\ref{8}) that, for each $p>N$, there exists some $ {\theta_1} \in (\frac{N}{2p},\frac{1}{2})$ such that
\begin{align*}
  \begin{split}
  & \Vert \Phi(u_2,v_2)-\Phi(u_1,v_1) \Vert_{C(\bar{\Omega})} \\
 \leq &  \Big\Vert \int_0^t \nabla \cdot e^{-A(t-s)} \left(\frac{\chi u_1}{v_1+c} \nabla v_1-\frac{\chi u_2}{v_2+c} \nabla v_2 \right) ds \Big\Vert_{C(\bar{\Omega})}\\
&+\Big\Vert  \int_0^t \nabla \cdot e^{-A(t-s)} (u_2-u_1) ds \Big\Vert_{C(\bar{\Omega})}\\
  \leq &  C \Big(\int_0^t  \Vert A^{\theta_1} e^{-A(t-s)}  \left(\frac{\chi u_1}{v_1+c} \nabla v_1-\frac{\chi u_2}{v_2+c} \nabla v_2 \right) \Vert_{C(\bar{\Omega})} ds\\
&+\int_0^t  \Vert A^{\theta_1} e^{-A(t-s)}  (u_2-u_1) \Vert_{C(\bar{\Omega})} ds\Big)
   \end{split}
\end{align*}
\begin{align}\label{10}
  \begin{split}
 \leq &  C \Big( \int_0^t (t-s)^{-\frac{1}{2}-\theta_1} e^{-\nu (t-s)} \Vert v_2-v_1 \Vert_{W^{1,p}(\Omega)} ds \\
 &+ \int_0^t (t-s)^{-\frac{1}{2}-\theta_1} e^{-\nu (t-s)} \Vert u_2-u_1 \Vert_{C(\bar{\Omega})} ds \Big) \\
  \leq& C t^{\frac{1}{2}-\theta_1} \Big(\Vert v_2-v_1 \Vert_{W^{1,p}(\Omega)}+\Vert u_2-u_1 \Vert_{C(\bar{\Omega})}\Big);
  \end{split}
\end{align}
moreover, there exists $\theta_2 \in (\frac{N}{2p},1)$ and $\theta_3\in(\frac{1}{2},1)$ such that
\begin{align}\label{11}
  \begin{split}
&\Vert \Psi(u_2,v_2)-\Psi(u_1,v_1) \Vert_{W^{1,p}(\Omega)}\\
=& \Big\Vert\int_0^t e^{-A_k(t-s)} \Big((1-\alpha)(v_2-v_1)+\beta(u_2-u_1) \Big)  ds \Big\Vert_{W^{1,p}(\Omega)}\\
\leq &C  \left( t^{1-{\theta_2}} \Vert u_2-u_1 \Vert_{C(\bar{\Omega})}+t^{1-{\theta_3}} \Vert v_2-v_1 \Vert_{W^{1,p}(\Omega)}  \right)
  \end{split}
\end{align}
Then it follows from (\ref{10}) and (\ref{11}) that, for each $p>N$, there exists some $\theta_4\in(0,1)$ such that
\[
\begin{split}
&\Vert (\Phi(u_2,v_2),\Psi(u_2,v_2))-(\Phi(u_1,v_1),\Psi(u_1,v_1)) \Vert_X \\
\leq &C t^{1-{\theta_4}}( \Vert u_2-u_1\Vert_{C(\bar{\Omega})}+  \Vert v_2-v_1 \Vert_{W^{1,p}(\Omega)}).
\end{split}\]
By taking $T$ sufficiently small, we can easily see that
\[\Vert (\Phi(u_2,v_2),\Psi(u_2,v_2))-(\Phi(u_1,v_1),\Psi(u_1,v_1)) \Vert_X \leq \frac{1}{2} \Vert (u_2,v_2)-(u_1,v_1) \Vert_X,\]
for all $t\in [0,T)$, hence we conclude from the Banach fixed point theorem that the operator $\mathcal{T}$ has a unique fixed point in $B(u_0,v_0)$ ,which is apparently a weak solution of (\ref{4}).  By the standard parabolic regularity arguments in \cite{LSU}, we can show that $(u(x,t),v(x,t))$ is a classical solution and it satisfies the regularity properties in the lemma.  Moreover, both $u(x,t)$ and $v(x,t)$ are nonnegative in $\Omega\times (0,T_{max})$ thanks to the maximum principles.

To show the uniqueness of the solution to (\ref{4}), we proceed as in \cite{Wk2} and let $w=u_1-u_2$ and $z=v_1-v_2$.  Then it follows from straightforward calculations that
\[\frac{1}{2}\frac{d}{dt}\int_\Omega w^2 dx+\int_\Omega \vert \nabla w \vert^2 dx=\chi\int_\Omega \Big(\frac{u_1\nabla v_1}{v_1+c}-\frac{u_2\nabla v_2}{v_2+c}\Big)\nabla w dx  \]
and
\[\frac{1}{2}\frac{d}{dt}\int_\Omega  \vert \nabla z \vert^2dx+k\int_\Omega  \vert \Delta z \vert^2dx+\alpha \int_\Omega  \vert \nabla z \vert^2dx=-\beta\int_\Omega w \Delta z dx.\]
Then we have from Young's inequality and the estimates on $u_i,\nabla v_i $, $i=1,2$ that there exist $c(T)>0$
\[\frac{d}{dt}\Big(\int_\Omega  \vert \nabla z \vert^2dx+\int_\Omega z^2 dx+ \int_\Omega w^2 dx\Big)\leq c(T) \Big(\int_\Omega  \vert \nabla z \vert^2dx+\int_\Omega z^2 dx+ \int_\Omega w^2 dx\Big)  \]
for all $t\in(0,T)$.  Then we have from the Gronwall's lemma that $w\equiv0$ and $z\equiv 0$ in $\Omega \times [0,T]$.  This shows the uniqueness and concludes the proof of this lemma.
\end{proof}

\subsection{Global existence}

In this section, we present the proof of Theorem \ref{T1}.  To this end, it is equivalent to show that the local solution obtained in Lemma \ref{lem1} exists for all $t\in(0,\infty)$ if $k>k_1$ and $\chi\in I^+(N/2)$, which are defined as below.   We want to remind our readers that the quadratic function $f(p)=(\chi-\frac{k-1}{2})^2-\frac{k}{p}$ plays an essential role in our analysis.
\begin{definition}\label{def22}
We define
\[I^+(p)=\Big\{\chi>0 ~\Big\vert~ f(p)=\left(\chi-\frac{k-1}{2}\right)^2-\frac{k}{p}<0 \Big\};\]
in particular, for $p=N/2$ we have
\begin{equation*} I^+(N/2)=
\left\{
\begin{array}{ll}
(0,\chi_2), & \text{ if } k\in(k_1,k_2),\\
(\chi_1,\chi_2), & \text{ if } k\in[k_2,\infty),
\end{array}
\right.
\end{equation*}
where $k_1=\left(1+4/N\right)-2\sqrt{2/N+4/N^2},~~k_2=(1+4/N)+2\sqrt{2/N+4/N^2}$, and $\chi_1=(k-1)/2-\sqrt{2k/N},~~\chi_2=(k-1)/2+\sqrt{2k/N}$.
\end{definition}

\begin{remark}Since $f(p)$ is monotone increasing in $p$, we readily see that $I^+(p_1) \subset I^+(p_2)$ if $p_1>p_2$.  $k_1, k_2$ are both positive and $\chi_1$, $\chi_2$ are the two roots of $f(N/2)=0$.  In general the set of $\chi$ for $f(N/2)<0$ takes the form $(\chi_1,\chi_2)$, however, we see that $\chi_1<0$ if $k\in(k_1,k_2)$ and $\chi_1<\chi_2<0$ if $k\in(0,k_1)$.  Our analysis does not work for model (\ref{4}) when $k\in(0,k_1)$ with $\chi<0$ between $\chi_1$ and $\chi_2$, where (\ref{4}) becomes a chemo-repulsion model.  On the other hand, we see that $\chi_1,\chi_2 \rightarrow 0^-$ as $k\rightarrow 0^+$, so our analysis only covers a very tiny range for $\chi<0$ if $k$ is small.  In this paper, we only study the model (\ref{4}) with $\chi>0$, which represents the chemoattraction of the chemical, and we shall assume that $k>k_1$ from now on.
\end{remark}

Theorem \ref{T1} is a consequence of several lemmas.  We shall first present the following result on the global boundedness of the chemical concentration $v(x,t)$ and the $L^p$-estimate on $u(x,t)$, the cell population density for some pre-determined $p$.
\begin{lemma}\label{lem2}
Let $N\geq2$ and we assume that $k\in(k_1,\infty)$ and $\chi \in I^+(N/2)$ as in Theorem \ref{T1}, then there exists a $C(t)$ such that the solution $v(x,t)$ of (\ref{4}) satisfies
\begin{equation}\label{12}
\sup_{s\in(0,t)}\Vert v(\cdot,s) \Vert _{L^\infty}  \leq C(t), \forall t \in(0,T_{\max});
\end{equation}
moreover, for all $p \in \left(\frac{N}{2}, \Big(\frac{k}{(\chi-\frac{k-1}{2})^2}\Big)^+\right)$, where we choose the conventional notation
\begin{equation*}\left(\frac{k}{(\chi-\frac{k-1}{2})^2}\right)^+=
\left\{
\begin{array}{ll}
\Big(\frac{k}{(\chi-\frac{k-1}{2})^2}\Big)& \text{ if } \chi\neq  \frac{k-1}{2} ,\\
+\infty,& \text{ if } \chi= \frac{k-1}{2} ,
\end{array}
\right.
\end{equation*}
there exists $C(t)$ such that the solution $u(x,t)$ of (\ref{4}) satisfies
\begin{equation}\label{13}
\sup_{s\in(0,t)}\Vert u(\cdot,s) \Vert _{L^p}  \leq C(t), \forall t \in(0,T_{\max}),
\end{equation}
where $T_{\max}\in(0,\infty)$ is the maximal existence time obtained in Lemma \ref{lem1}.
\end{lemma}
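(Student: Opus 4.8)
The plan is to reduce Lemma \ref{lem2} to a single weighted $L^p$ estimate for $u$ and then recover the $L^\infty$ bound on $v$ from the smoothing action of the semigroup. First I would record the one free estimate the problem hands us: integrating the first equation of (\ref{4}) over $\Omega$ and using the no-flux boundary condition, the divergence structure gives $\frac{d}{dt}\int_\Omega u=0$, so $\|u(\cdot,t)\|_{L^1}=\|u_0\|_{L^1}$ for all $t<T_{\max}$. This conserved mass is the anchor for every later interpolation. The real content is to promote it to an $L^p$ bound for some $p>N/2$, because once $\sup_{(0,t)}\|u\|_{L^p}\le C(t)$ with $p>N/2$ is available, the Duhamel formula for $v$ together with (\ref{6})--(\ref{8}) closes the first assertion: estimating the source contribution by $\Vert e^{-A_k(t-s)}\beta u\Vert_{L^\infty}\le C(t-s)^{-\frac{N}{2p}}e^{-\nu(t-s)}\Vert u\Vert_{L^p}$, the exponent $-\frac{N}{2p}$ is integrable near $s=t$ precisely when $p>N/2$, which yields (\ref{12}). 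Thus the whole lemma rests on (\ref{13}), and the lower constraint $p>N/2$ is exactly the threshold of this smoothing step.

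For (\ref{13}) I would differentiate the functional $y(t)=\int_\Omega u^p (v+c)^{-s}$ for a weight exponent $s>0$ to be chosen, i.e. test the $u$-equation against $p u^{p-1}(v+c)^{-s}$. Writing $w=v+c$ and integrating by parts, the diffusion term yields the good dissipation $-p(p-1)\int_\Omega u^{p-2}w^{-s}|\nabla u|^2$, the chemotactic flux contributes a cross term $\int_\Omega u^{p-1}w^{-s-1}\nabla u\cdot\nabla w$, and differentiating the weight produces $-s\int_\Omega u^{p}w^{-s-1}w_t$. The key move is to eliminate $w_t$ through the second equation of (\ref{4}), $w_t=k\Delta w-\alpha w+\alpha c+\beta u$: for $s>0$ the kinetic terms $\alpha c$ and $\beta u$ enter with a favourable sign (in particular $-s\beta\int_\Omega u^{p+1}w^{-s-1}$ is dissipative), while the $k\Delta w$ term, after one further integration by parts, produces a second cross term and the singular term $\int_\Omega u^{p}w^{-s-2}|\nabla w|^2$.

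At this point $y'(t)$ is, modulo terms that are linear in $y$ or manifestly negative, a quadratic form in the three gradient integrals
\[A=\int_\Omega u^{p-2}w^{-s}|\nabla u|^2,\quad B=\int_\Omega u^{p-1}w^{-s-1}\nabla u\cdot\nabla w,\quad D=\int_\Omega u^{p}w^{-s-2}|\nabla w|^2,\]
subject to the Cauchy--Schwarz constraint $|B|\le\sqrt{AD}$. The crux is to show this form is nonpositive. After collecting the coefficients of $A,B,D$ (rational in $p,k,\chi,s$) and optimising over the weight $s$, negativity reduces to a discriminant inequality in $\chi$, and this is where $f(p)=(\chi-\frac{k-1}{2})^2-\frac{k}{p}$ appears: the form can be made nonpositive exactly when $f(p)<0$, i.e. for $p<\big(k/(\chi-\frac{k-1}{2})^2\big)^+$, the balance point $\chi=\frac{k-1}{2}$ reflecting the mismatch between the unit motility of $u$ and the diffusion rate $k$ of $v$. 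Granting $f(p)<0$, the inequality collapses to $y'(t)\le C\,y(t)+C$, and Gronwall together with the conserved mass gives $y(t)\le C(t)$.

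The step I expect to be genuinely delicate concerns this singular term together with the coupling between the two bounds. Because the sensitivity sits at the borderline exponent $p=1$ in $(v+c)^{-p}$, the chemotactic flux generates a term of the critical form $\int_\Omega u^{p}|\nabla v|^2/(v+c)^2$ that cannot be discarded; the scheme works only if $s$ is tuned so that, after the $k\Delta v$ dissipation of the $v$-equation is brought in, this term is dominated by the genuine dissipations and $B$ is absorbed. Balancing these contributions — and in particular exploiting the full diffusive dissipation of the second equation rather than only the algebraic substitution of $\Delta v$ — is what sharpens the threshold to $f(p)<0$ with vertex $\chi=\frac{k-1}{2}$, and it is the heart of the computation. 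The second delicate point is the circular dependence of the two estimates: for $N\ge2$ one cannot bound $v$ in $L^\infty$ from $\|u\|_{L^1}$ alone. I would handle this by a bootstrap, using that the weighted inequality for $y(t)$ needs only the conserved mass and $f(p)<0$; feeding the resulting control of $u$ into the $v$-equation through (\ref{6})--(\ref{8}) produces (\ref{12}), after which $w^{-s}$ is comparable to a constant and the bound on $y(t)$ upgrades to the stated estimate (\ref{13}).
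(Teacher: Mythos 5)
Your proposal follows essentially the same route as the paper: there the $u$-equation is tested against $pu^{p-1}\varphi(v)$ with the fixed weight $\varphi(v)=(v+c)^{(1-p)/2}$ (your exponent $s=(p-1)/2$), negativity of the resulting quadratic form in $\nabla u,\nabla v$ reduces to exactly the discriminant condition $f(p)<0$, and the $u$--$v$ circularity is then broken by the bootstrap you describe. The one detail worth making explicit is how that bootstrap closes: the weighted bound gives $\Vert u\Vert_{L^p}\leq C\big(\sup\Vert v\Vert_{L^\infty}+c\big)^{\theta}\,y^{1/(2p)}$ with $\theta<1$, so in the Duhamel estimate for $v$ the quantity $M(\tau)=\sup_{(0,\tau)}\Vert v\Vert_{L^\infty}$ appears only linearly with a factor $\tau^{1-N/(2p)}$ and sublinearly, hence can be absorbed for small $\tau$, after which one iterates over $[\tau,2\tau],[2\tau,3\tau],\dots$ to reach any finite time.
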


\begin{proof}  For notational convenience, we denote
\[\chi(v)=\frac{\chi}{v+c}.\]
Then for any $p \in \Big(\frac{N}{2}, \Big(\frac{k}{(\chi-\frac{k-1}{2})^2}\Big)^+\Big)$, we test $u^p$ by $\varphi(v)=(v+c)^{\frac{1-p}{2}}$ and we obtain through integration by parts that
\begin{align}\label{14}
  \begin{split}
 &\frac{1}{p} \frac{d}{dt} \int_\Omega u^p \varphi(v) dx=\int_\Omega u^{p-1} \varphi(v) u_t dx +\frac{1}{p} \int_\Omega u^p \varphi'(v) v_t dx \\
=&  \int_\Omega u^{p-1} \varphi(v) \Delta u dx-\int_\Omega u^{p-1} \varphi(v) \nabla \cdot (u\chi(v) \nabla v) dx+\frac{k}{p} \int_\Omega u^p \varphi'(v) \Delta v dx\\
&+\frac{1}{p}\int_\Omega (\beta u^{p+1}-\alpha u^p v) \varphi'(v)dx \\
= &-\!(p\!-\!1)\!\!\int_\Omega\!\!\! u^{p-2} \varphi(v) \vert \nabla u \vert^2 dx\!+\! \int_\Omega\!\! \Big((p\!-\!1)\varphi(v) \chi(v)\!-\!(k\!+\!1)\varphi'(v)\Big)u^{p-1} \nabla u \nabla v dx\\
&  +\int_\Omega \Big(\varphi'(v)\chi(v)-\frac{k}{p} \varphi''(v)\Big) u^p \vert \nabla v \vert^2 dx+ \frac{1}{p} \int_\Omega (\beta u^{p+1}-\alpha u^p v) \varphi'(v) dx.
  \end{split}
\end{align}
By applying Young's inequality in (\ref{14}), we have that
\begin{equation}\label{15}
\begin{split}
  & \left. \int_\Omega \Big((p-1)\varphi(v) \chi(v)-(k+1)\varphi'(v)\Big)u^{p-1} \nabla u \nabla v dx \right.\\
\leq    & \left.  \!(p\!-\!1)\!\!\!\int_\Omega \!\!u^{p-2} \varphi(v) \vert \nabla u \vert^2 dx\!+ \!\!\!\int_\Omega\!\! \frac{ \big((p\!-\!1)\varphi(v) \chi(v)\!-\!(k\!+\!1)\varphi'(v)\big)^2u^p \vert \nabla v \vert^2}{4(p-1)\varphi(v)} dx. \right.
\end{split}
\end{equation}
Moreover, for all $p\in(1,\infty)$, we claim that
\begin{equation}\label{16}
\frac{ \Big((p-1)\varphi(v) \chi(v)-(k+1)\varphi'(v)\Big)^2}{4(p-1)\varphi(v)} +\left(\varphi'(v)\chi(v)-\frac{k}{p} \varphi''(v)\right) < 0,
\end{equation}
To prove this, we substitute
\[\varphi'(v)=\frac{1-p}{2}(v+c)^{-\frac{p+1}{2}},~\varphi''(v)=\frac{p^2-1}{4}(v+c)^{-\frac{p+3}{2}},\]
and $\chi(v)=\frac{\chi}{v+c}$ into (\ref{16}), then we readily see that it is equivalent to the following quadratic inequality
\begin{equation}\label{17}
f(p)=\left(\chi-\frac{k-1}{2} \right)^2-\frac{k}{p}<0,
\end{equation}
which obviously holds for $p \in \left(\frac{N}{2}, \Big(\frac{k}{(\chi-\frac{k-1}{2})^2}\Big)^+\right)$, therefore we have proved our claim.  After balancing out the $\nabla u \nabla v $ terms and making the coefficient of $\vert \nabla v \vert^2 $ negative from (\ref{16}), we observe that (\ref{14}) becomes
\begin{equation}\label{18a}
\begin{split}
 &\frac{d}{dt} \int_\Omega u^p (v+c)^{\frac{1-p}{2}} dx \\
 \leq &\frac{\alpha(p-1)}{2} \int_\Omega u^p  (v+c) ^{\frac{1-p}{2}}dx-\frac{\beta(p-1)}{2}\int_\Omega u^{p+1} (v+c)^{-\frac{p+1}{2}} dx.
 \end{split}
\end{equation}
Putting
\[y_p(t)=\int_\Omega u^p(\cdot,t)  (v(\cdot,t)+c)^\frac{1-p}{2} dx,\]
we see that (\ref{18a}) implies that
\[y'_p(t) \leq \frac{\alpha(p-1)}{2}y_p(t),~\text{while } y_p(0)=\int_\Omega u_0^p  (v_0+c)^\frac{1-p}{2} dx.\]
Solving this differential inequality gives us
\begin{equation}\label{18}
y_p(t) \leq y_p(0) e^{\frac{\alpha(p-1)}{2}t}, t\in(0,\infty),
\end{equation}
then we have from (\ref{18}) that $y_p(t)$ is bounded on $(0,t)$ for all $t<\infty$.  On the other hand, we recall $v(x,t)$ from its integral representation form
\[v(\cdot,t)=e^{-A_kt}v_0 + \int_0^t e^{-A_k(t-s)} \Big(\beta u(\cdot,s)+(1-\alpha)v(\cdot,s)  \Big) ds,~t\in(0,\infty).\]
Applying estimate (\ref{6}) with $q=\infty$ to the abstract form above, we obtain that
\begin{equation}\label{19}
 \Vert v(\cdot,t) \Vert_{L^\infty}\! \leq C\! \Big(\!\Vert v_0 \Vert_{L^\infty}\!+\!\int_0^t\!\! (t-s)^{-\frac{N}{2p}} e^{-\nu(t-s)} \big(\beta \Vert u(\cdot,s) \Vert_{L^p}+\vert \alpha-1\vert \Vert v(\cdot,s) \Vert_{L^p}  \big)  ds \Big).
\end{equation}
For the integrand on the right hand side of (\ref{19}), we have that
\begin{equation}\label{20}
\Vert u(\cdot,t) \Vert_{L^p} \leq y_{2p}^{\frac{1}{2p}}(t) \cdot \Vert v(\cdot,t)+c \Vert_{L^{p-\frac{1}{2}}} ^\frac{p-\frac{1}{2}}{2p}, t\in(0,\infty),
\end{equation}
To show (\ref{20}), we have that
\[\int_\Omega u^p=\int_\Omega u^p(v+c)^{\frac{1-2p}{4}}(v+c)^{\frac{2p-1}{4}} \leq \left( \int_\Omega u^{2p}(v+c)^{\frac{1-2p}{2}} \right)^\frac{1}{2}  \left(\int_\Omega (v+c)^{\frac{2p-1}{2}} \right)^\frac{1}{2}, \]
where the last inequality follows from Holder's inequality.  By taking the $p$-th roots on both hand sides of the inequality above, we conclude that
\[\left(\int_\Omega u^p \right)^\frac{1}{p} \leq \left( \int_\Omega u^{2p}(v+c)^{\frac{1-2p}{2}} \right)^\frac{1}{2p}  \left( \int_\Omega(v+c)^{\frac{2p-1}{2}} \right)^\frac{1}{2p}, \]
which is the exact inequality we desire in (\ref{20}).

We are now ready to show that $\Vert v(\cdot,t) \Vert_{L^\infty}$ is bounded for all $t\in(0,\infty)$.  To this end, we let
\[M(\tau)=\sup_{t\in(0,\tau) } \Vert v(\cdot,t) \Vert_{L^\infty}, \forall \tau >0,\]
then by (\ref{19}) we have that for all $\tau>0$,
\begin{equation}\label{21}
M(\tau)\!\leq \!C\!\left(\!\Vert v_0 \Vert_{L^\infty}\!+\!\frac{\vert \alpha\!-\!1 \vert }{1\!-\!\frac{N}{2p}}\tau^{1-\frac{N}{2p}} M(\tau)+\frac{\beta}{1\!-\!\frac{N}{2p}}\tau^{1-\frac{N}{2p}} \!\!\sup_{t\in(0,\tau)} \!\! \Vert u(\cdot,s) \Vert_{L^p}\!\! \right),
\end{equation}
where $C$ is independent of $\tau$.  Moreover, we have from (\ref{20}) that
\begin{equation}\label{22}
\sup_{t\in(0,\tau)}  \Vert u(\cdot,s) \Vert_{L^p}\leq \sup_{t\in(0,\tau) } \Vert v(\cdot,t)+c \Vert_{L^\infty}^\frac{p-\frac{1}{2}}{2p} y_{2p}^\frac{1}{2p}(\tau)\leq \left(M^\frac{p-\frac{1}{2}}{2p}+c^\frac{p-\frac{1}{2}}{2p} \right) y_{2p}^\frac{1}{2p}(\tau),
\end{equation}
therefore we can take $\tau$ small and conclude from (\ref{21}) and (\ref{22}) that
\[M(\tau) \leq C \Big(\Vert v_0 \Vert_{L^\infty},N,\Omega \Big),\]
i.e., $\Vert v(\cdot,t) \Vert_{L^\infty}$ is bounded in $[0,\tau]$.  Repeating this argument on time interval $[\tau,2\tau],[2\tau,3\tau],...$, we see that $\Vert v(\cdot,t) \Vert_{L^\infty}$ is bounded on any finite time interval.  Therefore we have established the global boundedness of the chemical concentration $v(x,t)$, i.e, $\Vert v(\cdot,t) \Vert_{L^\infty}\leq C(t), t\in(0,\infty).$  Moreover, we readily have from (\ref{20}) that
\begin{equation*}
\Vert u(\cdot,t) \Vert_{L^p} \leq y_{2p}^{\frac{1}{2p}}(t) C(t)\leq C(\Vert u_0 \Vert_{L^\infty},\Vert v_0 \Vert_{L^\infty})e^{\frac{\alpha(2p-1)}{4p}t}\leq C(t), t\in(0,\infty).
\end{equation*}
Thus we have proved Lemma \ref{lem2}.
\end{proof}

Lemma \ref{lem2} provides an estimate on $\Vert u \Vert_{L^p}$ and to establish the boundedness of $\Vert u \Vert_{L^\infty}$, we need an estimate on the norm of $\nabla v$, which is presented in the following lemma.
\begin{lemma}\label{lem3}
Let $N\geq2$ and we assume that $k\in(k_1,\infty)$ and $\chi \in I^+(N/2)$, then there exists $C(t)>0$ such that,

if $\chi \in I^+(N)$,
\begin{equation}\label{24}
\sup_{s\in(0,t)} \Vert \nabla v(\cdot,s) \Vert _{L^\infty}  \leq C(t)<\infty, t\in(0,T_{\max})
\end{equation}
and if $\chi \in I^+(N/2)\backslash I^+(N)$,
\begin{equation}\label{25}
\sup_{s\in(0,t)} \Vert \nabla v(\cdot,s) \Vert _{L^q}  \leq C(t)<\infty, t\in(0,T_{\max})
\end{equation}
for all $q< \Big(\frac{k}{(\chi-\frac{k-1}{2})^2-\frac{k}{N}}\Big)^+$, where $(\cdot)^+$ is the same conventional notation as in Definition \ref{def22}.
\end{lemma}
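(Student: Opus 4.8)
The plan is to estimate $\nabla v$ directly from the variation-of-constants representation
\[
v(\cdot,t)=e^{-A_kt}v_0+\int_0^t e^{-A_k(t-s)}\big(\beta u(\cdot,s)+(1-\alpha)v(\cdot,s)\big)\,ds,
\]
differentiating in space and applying the analog of the smoothing estimate (\ref{8}) for the operator $A_k$. First I would differentiate and split off the initial-data contribution $\nabla e^{-A_kt}v_0$; since $v_0\in W^{2,p}(\Omega)$ with $p>N$, the embedding $W^{2,p}\hookrightarrow C^1(\bar\Omega)$ together with the uniform boundedness of $(e^{-A_kt})_{t\ge0}$ controls this term by a constant on any finite time interval, so the whole matter reduces to the Duhamel integral.

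For the integral term, applying the $A_k$-version of (\ref{8}) with source in $L^p$ gives, for any target exponent $q$,
\[
\Vert \nabla v(\cdot,t)\Vert_{L^q}\le C+C\int_0^t\Big(1+(t-s)^{-\frac12-\frac{N}{2}(\frac1p-\frac1q)}\Big)e^{-\nu(t-s)}\big(\beta\Vert u(\cdot,s)\Vert_{L^p}+|1-\alpha|\Vert v(\cdot,s)\Vert_{L^p}\big)\,ds.
\]
Lemma \ref{lem2} bounds $\Vert u(\cdot,s)\Vert_{L^p}$ by $C(t)$ for every $p\in(\frac N2,(\frac{k}{(\chi-\frac{k-1}{2})^2})^+)$, while $\Vert v(\cdot,s)\Vert_{L^p}\le|\Omega|^{1/p}\Vert v(\cdot,s)\Vert_{L^\infty}\le C(t)$ by (\ref{12}); hence both $L^p$ norms under the integral are bounded on $(0,t)$ and may be pulled out. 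The decisive point is the integrability at $s=t$ of the factor $(t-s)^{-\frac12-\frac N2(\frac1p-\frac1q)}$, which holds precisely when $\frac12+\frac N2(\frac1p-\frac1q)<1$, i.e. when $\frac1q>\frac1p-\frac1N$. Under this restriction the temporal integral converges and yields a finite $C(t)$.

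It then remains to choose $p$ optimally to reach the largest admissible $q$. Since Lemma \ref{lem2} allows $\frac1p$ to be taken arbitrarily close to $\frac{(\chi-\frac{k-1}{2})^2}{k}$ from above, the constraint $\frac1q>\frac1p-\frac1N$ permits any $q$ with $\frac1q>\frac{(\chi-\frac{k-1}{2})^2}{k}-\frac1N$, that is $q<\big(\frac{k}{(\chi-\frac{k-1}{2})^2-\frac kN}\big)^+$, which is (\ref{25}). When $\chi\in I^+(N)$ one has $(\chi-\frac{k-1}{2})^2<\frac kN$, equivalently $p^*:=\frac{k}{(\chi-\frac{k-1}{2})^2}>N$; choosing any $p\in(N,p^*)$ then makes $\frac1p-\frac1N<0$, so $q=\infty$ is admissible and the integral still converges because $\frac12+\frac N{2p}<1$, giving the $L^\infty$ bound (\ref{24}).

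The hard part will not be conceptual but bookkeeping: one must verify that the range $p\in(\frac N2,p^*)$ supplied by Lemma \ref{lem2} genuinely overlaps the range of $p$ needed for the target $q$ — in particular that $p>N$ is available exactly when $\chi\in I^+(N)$, which is precisely where the two cases split — and that the monotonicity $I^+(N)\subset I^+(N/2)$ makes the hypothesis $\chi\in I^+(N/2)$ consistent with the case distinction. Some care is also needed to confirm that (\ref{8}) holds in the stated form for $A_k=-k\Delta+1$ rather than $-\Delta$, but this only rescales time by $k$ and improves the exponential decay, so no new difficulty arises there.
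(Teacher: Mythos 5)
Your proposal is correct and follows essentially the same route as the paper: both estimate $\nabla v$ from the Duhamel representation using the semigroup gradient-smoothing estimate, insert the $L^p$ bound on $u$ from Lemma \ref{lem2} with $\frac1p$ taken slightly above $\frac{(\chi-\frac{k-1}{2})^2}{k}$, and reduce the matter to the integrability condition $\frac12+\frac N2(\frac1p-\frac1q)<1$, which yields exactly the threshold $q<\bigl(\frac{k}{(\chi-\frac{k-1}{2})^2-\frac kN}\bigr)^+$ and the $q=\infty$ case when $\chi\in I^+(N)$. The only difference is presentational: you state the constraint $\frac1q>\frac1p-\frac1N$ and optimize over $p$, whereas the paper fixes $p$ via an explicit small parameter $\epsilon$ and verifies the exponent is greater than $-1$ by direct computation.
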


\begin{proof}  We choose $p=\Big((\chi-\frac{k-1}{2})^2/k+\frac{\epsilon}{N}\Big)^{-1}$, where $\epsilon>0$ is taken to be an arbitrarily small but fixed number, then it follows from Lemma \ref{lem2} that
\[\Vert u(\cdot,t) \Vert_{L^p(\Omega)} \leq C(t), ~\Vert v(\cdot,t) \Vert_{L^\infty(\Omega)} \leq C(t), \forall t \in (0,\infty).\]
Now, for all $q\in(p,\infty]$, we have from the smoothing estimates that, there exists a constant $C>0$ independent of $t$ such that
\begin{equation}\label{26}
\begin{split}
\Vert v(\cdot,t)& \Vert _{W^{1,q}(\Omega)} \leq C\Big(\Vert v_0 \Vert _{W^{1,q}(\Omega)} \\
+ &\int_0^t (t-s)^{-\frac{1}{2}-\frac{N}{2}(\frac{1}{p}-\frac{1}{q})} \Big(\beta \Vert u(\cdot,s) \Vert_{L^p}+\vert \alpha-1 \vert \Vert v(\cdot,s) \Vert_{L^p}\Big) ds\Big).
\end{split}
\end{equation}
Our arguments are divided into the following two cases:

if $\chi \in I^+(N)$, we take $q=\infty$ in (\ref{26}) and it leads us to
\begin{align}\label{27}
  \begin{split}
 \Vert \nabla v(\cdot,t) \Vert _{L^{\infty}(\Omega)} & \left.  \leq C \left(\Vert v_0 \Vert _{W^{1,\infty}(\Omega)}\!+\!\int_0^t (t\!-\!s)^{-\frac{1}{2}-\frac{N}{2p}} \Big(\Vert u(\cdot,s) \Vert_{L^p}\!+\!\Vert v(\cdot,s) \Vert_{L^p} \Big) ds\right) \right.\\
   & \left.  \leq C \left(\Vert v_0 \Vert _{W^{1,\infty}(\Omega)}+C(t) \int_0^t (t-s)^{-\frac{1}{2}-\frac{N}{2p}}  ds\right)  , \right.
  \end{split}
\end{align}
Since $\chi \in I^+(N)$, we have that $(\chi-\frac{k-1}{2})^2 < \frac{k}{N}$ and
\[p=\frac{k}{(\chi-\frac{k-1}{2})^2+\epsilon}>\frac{k}{\frac{k}{N}+\epsilon}>\frac{N}{2},\]
which implies that $-\frac{1}{2}-\frac{N}{2p}>-1$, then we must have from (\ref{27}) that
\begin{equation*}
\Vert \nabla v(\cdot,t) \Vert _{L^\infty}  \leq C(t), t\in(0,\infty);
\end{equation*}

if $\chi \in I^+(N/2)\backslash I^+(N)$.  Then similar as above, we have that
\begin{equation}\label{28}
\begin{split}
\Vert v(\cdot,t) \Vert _{W^{1,q}(\Omega)} \leq C\left(\Vert v_0 \Vert _{W^{1,q}(\Omega)} + C(t) \int_0^t (t-s)^{-\frac{1}{2}-\frac{N}{2}(\frac{1}{p}-\frac{1}{q})}   ds\right).
\end{split}
\end{equation}
To show (\ref{25}), it is equivalent to prove $-\frac{1}{2}-\frac{N}{2}(\frac{1}{p}-\frac{1}{q})>-1$ for all \\$q< \Big(\frac{k}{(\chi-\frac{k-1}{2})^2-\frac{k}{N}}\Big)^+$.  To this end, we can choose $\epsilon$ to be sufficiently small such that $q<  \frac{k}{(\chi-\frac{k-1}{2})^2-\frac{k}{N}+2 \frac{k\epsilon}{N} }$, then we have that
\[
\begin{split}
-\frac{1}{2}-\frac{N}{2}\left(\frac{1}{p}-\frac{1}{q}\right)&>-\frac{1}{2}-\frac{N}{2}\left(\frac{(\chi-\frac{k-1}{2})^2+\frac{k\epsilon}{N}}{k}-
\frac{(\chi-\frac{k-1}{2})^2-\frac{k}{N}+2\frac{k\epsilon}{N}}{k}\right)\\
&=-1+\frac{\epsilon}{2}>-1,
\end{split}\]
thus this shows that $\Vert v(\cdot,t) \Vert _{W^{1,q}(\Omega)}<C(t)$ for all $t\in(0,\infty)$ in virtue of (\ref{28}) and we conclude the proof of Lemma \ref{lem3}.
\end{proof}

Now we are ready to present the proof of our main theorem.
\begin{proof}[Proof\nopunct] \emph{of Theorem} \ref{T1}.
According to Theorem 5.2 in \cite{Am}, in order to show that $T_{\max}=\infty$, it is sufficient to prove that $\sup_{s\in(0,t)} \Vert (u,v)(\cdot,s) \Vert_{L^\infty}$ is bounded for all $t\in(0,T_{\max})$ since (\ref{4}) is a triangular system, hence we want to show that
\begin{equation}\label{29}
\Vert u(\cdot,t) \Vert_{L^\infty(\Omega)}+\Vert v(\cdot,t) \Vert_{L^\infty(\Omega)} \leq C(t),~\text{for all}~t\in(0,\infty),
\end{equation}
and we shall only need to prove the boundedness of $\Vert u(\cdot,t) \Vert_{L^\infty(\Omega)}$ for all $t\in(0,\infty)$ in virtue of Lemma \ref{lem2}.  To this end, we rewrite the $u$-equation into the following abstract form
\begin{equation}\label{30}
u(\cdot,t)=e^{t \Delta}u_0-\int_0^t  e^{(t-s)\Delta} \nabla \cdot \Big(\frac{\chi u(\cdot,s)}{v(\cdot,s)+c} \nabla v(\cdot,s) \Big) ds.
\end{equation}
Moreover, we will apply the smoothing estimate that, if $p\in(1,\infty)$ and $\theta \in(0,1)$, then for any $\epsilon>0$, there exists $C_\epsilon$ such that
\[\Vert A^\theta e^{t\Delta} \nabla \cdot w \Vert_{L^p} \leq C_\epsilon t^{-\frac{1}{2}-\theta-\epsilon} e^{-\nu t} \Vert w \Vert_{L^p},~\forall w \in L^p.\]
Then for all $1<p,~q\leq \infty$, we have from (\ref{30}) and the estimate above, for any $\epsilon>0$, there exists $C_\epsilon>0$ such that
\begin{align}\label{31}
  \begin{split}
&\Vert u(\cdot,t) \Vert_{W^{1,q}(\Omega)} \leq C \left(\Vert u_0 \Vert_{W^{1,q}(\Omega)}+ \int_0^t  \big\Vert e^{(t-s)\Delta} \nabla \cdot \Big(\frac{\chi u(\cdot,s)}{v(\cdot,s)+c} \nabla v(\cdot,s) \Big) \big\Vert_{L^p} ds \right)\\
\leq& C \left(\Vert u_0 \Vert_{W^{1,q}(\Omega)}+C_\epsilon \chi \int_0^t (t-s)^{-1-\frac{N}{2}(\frac{1}{p}-\frac{1}{q})-\epsilon} e^{-\nu (t-s)} \big\Vert  \frac{u(\cdot,s)}{v(\cdot,s)+c} \nabla v(\cdot,s) \big\Vert_{L^p} ds\right) \\
  \leq & C \left(\Vert u_0 \Vert_{W^{1,q}(\Omega)}+C_\epsilon \chi \int_0^t (t-s)^{-1-\frac{N}{2}(\frac{1}{p}-\frac{1}{q})-\epsilon} \Vert  u(\cdot,s) \nabla v(\cdot,s) \Vert_{L^p} ds \right) \\
\leq & C \left(\Vert u_0 \Vert_{W^{1,q}(\Omega)}\!+\!C_\epsilon \chi\!\! \int_0^t\!\! (t\!-\!s)^{-1-\frac{N}{2}(\frac{1}{p}-\frac{1}{q})-\epsilon} \Vert  u(\cdot,s)  \Vert_{L^{\tilde{p}}} \! \Vert\nabla v(\cdot,s)  \Vert_{L^{\frac{\tilde{p} p}{\tilde{p}-p}}} ds \right).
  \end{split}
\end{align}
where $\tilde{p}>p$ and we have used the fact
\[ \Vert  u \nabla v \Vert_{L^p} \leq  \Vert  u  \Vert_{L^{\tilde{p}}}   \Vert\nabla v  \Vert_{L^{\frac{\tilde{p} p}{\tilde{p}-p}}}, t\in(0,\infty).\]
To verify this inequality, we put $r=\frac{\tilde{p}}{p}>1$ and apply Holder's inequality to see that
\[\int_\Omega u^p \vert \nabla v \vert^p dx \leq \left(\int_\Omega u^{pr}  dx \right)^\frac{1}{r}\left(\int_\Omega \vert \nabla v \vert^{pr^*} dx \right)^\frac{1}{r^*}=\Vert u \Vert^p_{L^{pr}}   \Vert \nabla v \Vert^p_{L^{pr^*}},\]
where $r^*=\frac{\tilde{p}}{\tilde{p}-p}$ is the Sobolev conjugate of $r$.  Then the inequality above leads to the desired estimate on $\Vert u\nabla v \Vert_{L^p}$.  Again we divide our analysis into the following two cases:

if $\chi \in I^+(N)$;  we can put $\tilde{p}=\left(\Big(\chi-\frac{k-1}{2}\Big)^2/k+\frac{\epsilon}{2N}\right)^{-1}$ and choose
\[p=\Big((\chi-\frac{k-1}{2})^2/k+\frac{\epsilon}{N}\Big)^{-1},~q=\left(\Big(\chi-\frac{k-1}{2}\Big)^2/k+\frac{4\epsilon}{N}\right)^{-1},\]
then we readily see that $\tilde{p}>p$;  moreover, we can take $\epsilon$ to be sufficiently small such that $q>N$ and
\[-1-\frac{N}{2}\Big(\frac{1}{p}-\frac{1}{q}\Big)-\epsilon=-1+\frac{\epsilon}{2}>-1,\]
 then we can conclude from (\ref{31}) and the fact $W^{1,q} \hookrightarrow \L^\infty$ for $q>N$ that for each $t\in(0,\infty)$, $\Vert u(\cdot,t) \Vert_{L^\infty(\Omega)}<C(t)$ with some positive $C(t)$.

if $\chi \in I^+(N/2)\backslash I^+(N)$.  We claim that, if $\Vert u(\cdot,t) \Vert_{L^\mu(\Omega)} <C(t)$ for some $\mu \in \left(\frac{N}{2}, \Big(\frac{k}{(\chi-\frac{k-1}{2})^2}\Big)^+\right)$, then
$\Vert u(\cdot,t) \Vert_{L^{\bar{\mu}}(\Omega)} <C(t)$ for all $\bar{\mu}\leq \tilde{\mu}$, where
\begin{equation}\label{32}
\tilde{\mu}=
\left\{
\begin{array}{ll}
\Big(\frac{1}{\mu}-\left(\frac{2}{N}-\frac{(\chi-\frac{k-1}{2})^2}{k}\right)\Big)^{-1}, \text{ if } \frac{N}{2}<\mu< \Big(\frac{2}{N}-\frac{(\chi-\frac{k-1}{2})^2}{k}\Big)^{-1}, \\
+\infty, \text{ if } \mu\geq \Big(\frac{2}{N}-\frac{(\chi-\frac{k-1}{2})^2}{k}\Big)^{-1}.
\end{array}
\right.
\end{equation}
To prove this claim, we first take $\epsilon>0$ small and put $\tilde{p}=\Big(\frac{1}{\mu}+\frac{\epsilon}{2N}\Big)^{-1}$;  moreover we introduce the parameters $p$ and $q$ through
\[\frac{1}{p}=\frac{(\chi-\frac{k-1}{2})^2}{k}-\frac{1}{N}+\frac{1}{\mu}+\frac{4 \epsilon}{N},~\frac{1}{q}=\frac{1}{p}+\frac{2\epsilon}{N}.\]
To apply (\ref{13}) and (\ref{26}), we can choose $\epsilon$ to be sufficiently small and it follows from straightforward calculations that $p<\tilde{p}<\mu$, $\frac{\tilde{p} p}{\tilde{p}-p}<\frac{k}{(\chi-\frac{k-1}{2})^2-\frac{k}{N}}$ and $-1-\frac{N}{2}\Big(\frac{1}{p}-\frac{1}{q}\Big)-\epsilon>-1+\frac{\epsilon}{2}>-1$.  After applying Lemma \ref{lem2} and Lemma \ref{lem3} to (\ref{31}) with $p,\tilde{p}$ and $q$ chosen above, we readily see that
\[\Vert u(\cdot,t) \Vert_{W^{1,q}(\Omega)} \leq C(t),\text{ for all } t\in (0,\infty),\]
then we have proved the claim thanks to the embeddings that $W^{1,q} \hookrightarrow L^\frac{Nq}{N-q}$ if $q\leq N$ and $W^{1,q} \hookrightarrow L^\infty$ if $q> N$.  Now we proceed to prove Theorem \ref{T1}, showing that $\Vert u(\cdot,t) \Vert_{L^\infty}<C(t)$ by an iteration argument.  To this end, we define
\begin{equation}\label{33}
\mu_{n+1}=
\left\{
\begin{array}{ll}
\Big(\frac{1}{\mu_n}-\left(\frac{2}{N}-\frac{(\chi-\frac{k-1}{2})^2}{k}\right)\Big)^{-1},\text{if } \frac{N}{2}<\mu_n< \Big(\frac{2}{N}-\frac{(\chi-\frac{k-1}{2})^2}{k}\Big)^{-1}, \\
+\infty,\text{if } \mu_n \geq \Big(\frac{2}{N}-\frac{(\chi-\frac{k-1}{2})^2}{k}\Big)^{-1},
\end{array}
\right.
\end{equation}
since $\chi \in I^+(N/2)\backslash I^+(N)$, one can always find $\mu_1$ such that $\frac{N}{2}<\mu_1< \Big(\frac{2}{N}-\frac{(\chi-\frac{k-1}{2})^2}{k}\Big)^{-1}$.  On the other hand, we have that $\mu_{n+1}\geq \mu_n$, $n=1,2,...$ and $\mu_n \rightarrow \infty$ for sufficiently large but finite $n$.  Moreover, since $\Vert u(\cdot,t) \Vert_{L^{\mu_1}}<C(t)$, we use this fact together with (\ref{33}) iteratively to $\Vert u(\cdot,t) \Vert_{L^{\mu_n}}$ and eventually we can show that $\Vert u(\cdot,t) \Vert_{L^\infty}<C(t)$, $t\in(0,\infty)$.  This concludes the proof of Theorem \ref{T1}.
\end{proof}

\section{Conclusion and discussion}
In this paper, we investigate the global existence of a parabolic-parabolic Keller-Segel chemotaxis model (\ref{4}) under homogeneous Neumann boundary over smoothly bounded domains $\Omega\subset \mathbb{R}^N$, $N\geq2$.  The chemotactic sensitivity is chosen to be a saturated logarithmic function.  This adaptation is made according to the well-accepted Weber-Fechner's law of stimulus perception.  Global existence of (\ref{4}) are established provided with conditions on $k$ and $\chi$.-see Theorem \ref{T1}.  Formally speaking, (\ref{4}) admits a unique nonnegative global solution if $\chi$ is not too large for all $k\in(k_1,k_2)$, where $k_1$ and $k_2$ are introduced in Definition \ref{def22} and $0<k_1<1<k_2$.  This generalizes the results in the literature for $k=1$, which can not be dropped due to technical issues.

In our proof of Lemma \ref{lem2} and that of Theorem \ref{T1}, we require that $\chi\in(0,\chi_2)$ for $k\in(k_1,k_2)$ and $\chi\in(\chi_1,\chi_2)$ for $k\in[k_2,\infty)$.  It is necessary to point out that, these conditions are imposed only due to technical reasons and we believe that both requirements can be relaxed, at the least the unusual lower bound on $\chi$ when $k\in[k_2,\infty)$.  Actually, for $\chi=0$, (\ref{4}) clearly admits global solutions for all $k\in(0,\infty)$, therefore, it is quite reasonable to believe that it still holds for $\chi$ being small.  New techniques are needed to address this issue.

There are apparently many important and interesting unsolved questions regarding the global existence of (\ref{4}).  Besides those discussed above, we list some others here.  Theorem \ref{T1} merely states that solutions exist globally and the finite time blow-up is impossible.  However, since the solution bound depends on the time, it is unclear whether infinite time blow-up can be excluded or not, even when provided with the assumptions on $\chi$ and $k$.  Though a mathematically strict proof is not presented here, we surmise that solution bound is uniform in time.  Actually, the cross-diffusion term $\frac{u \nabla v}{v+c}$ advects down the directed chemotactic movement of cells as the chemical density increases, then the diffusion term dominates in the $u$-equation and it smoothes $u$ hence $\nabla v $ in the $v$-equation.  The dynamical behavior of the global solutions of (\ref{4}) can also be an interesting problem to work on.  The existence or non-existence of nontrivial steady states of (\ref{4}) also deserves exploring in the future.

\section*{Acknowledgments}
The author would like to thank Prof. Xuefeng Wang at Tulane University for his valuable comments and suggestions which greatly improved the exposition of this paper.  The author is also grateful for various insightful and constructive suggestions of the anonymous referee, which greatly improved the exposition of this manuscript.  He also wants to convey thanks to Brendon Enlow for carefully reading and polishing the writing of this paper.  This research is supported by the Fundamental Research Funds for the Central Universities, China and the Project-sponsored by SRF for ROCS, SEM.


\medskip
\medskip

\end{document}